\def\TSP{Travelling Salesman Problem}
\def\TSPs{Travelling Salesman Problems}
\def\mytitle{On the Solution of the \TSP{} for Nonlinear Salesman Dynamics using Symbolic Optimal Control}
\def\myname{Alexander Weber and Alexander Knoll}
\def\mykeywords{Travelling Salesman Problem, Symbolic Optimal Control, Coverage, Abstractions, Symbolic Model, Optimal Control, Nonlinear System}
\def\arxiv{}
\algnewcommand\algorithmicinput{\textbf{Input:}}
\algnewcommand\Input{\item[\algorithmicinput]}
\newtheoremstyle{rem}{\topsep}{\topsep}{\normalfont}{0pt}{\bfseries}{.}{ }{\thmname{#1}\thmnumber{#2}\thmnote{ \textup{(#3)}}}
\newtheorem{definition}{Definition}[section]
\newtheorem{theorem}{Theorem}[section]
\theoremstyle{rem}
\def\endexample{\popQED\@endtheorem}
\def\begriff#1%
\else\textit{#1}\fi}
\def\intcc#1{\ensuremath{\left[#1\right]}}
\def\intco#1{\ensuremath{\left[#1\right[}}
\def\intoc#1{\ensuremath{\left]#1\right]}}
\def\intoo#1{\ensuremath{\left]#1\right[}}
\newcommand{\R}{\mathbb{R}}
\newcommand{\Z}{\mathbb{Z}}
\def\implies{\relax\ifmmode\mathrel{\Rightarrow}\else$\implies$ \fi}
\DeclareMathOperator*{\argmin}{arg\,min}
\title{\bf \LARGE \mytitle}
\author{
\myname
\thanks{
The authors are with the
Munich University of Applied Sciences,
Dept. of Mechanical, Automotive and Aeronautical Eng.,
80335 M\"unchen, Germany.
}%
\thanks{This work has been supported by the German Federal Ministry of Education and Research (Project ARCUS; No. 13FH734IX6). %
}%
}%
\begin{document}
\maketitle

\begin{abstract}
This paper proposes an algorithmic method
to heuristically solve 
the famous Travelling Salesman Problem (TSP)
when the salesman's path evolves 
in continuous state space and discrete time but 
with otherwise arbitrary (nonlinear) dynamics.
The presented method is based on 
the framework of Symbolic Control. 
In this way, our method returns 
a provably correct state-feedback controller 
for the underlying coverage specification, 
which is the TSP leaving out 
the requirement for optimality on the route. 
In addition, we utilize 
the Lin-Kernighan-Helsgaun TSP solver 
to heuristically optimize 
the cost for the overall taken route. 
Two examples, an urban parcel delivery task and 
a UAV reconnaissance mission,
greatly illustrate 
the powerfulness of the proposed heuristic. 
\end{abstract}
\section{Introduction}
\label{s:introduction}
\ifx\arxiv\undefined
\else
\thispagestyle{fancy}
\fi
One of the most prominent problems in 
combinatorial optimization is the 
Travelling Salesman Problem (TSP), 
which \textsc{R. Bellman} formulates as: 
\textit{``A salesman is required to visit 
once and only once each of $N$ different 
cities starting from a base city, 
and returning to this city. 
What path minimizes the total distance travelled
by the salesman?"} \cite{Bellman61}. 
In this paper we aim at a 
fully-automated solution technique
for the TSP when it is posed 
in the \mbox{$n$-dimensional} real space
with the salesman's path following nonlinear dynamics.
To be specific, the salesman dynamics 
are assumed to be given by 
the time-discrete continuous-state control system
\begin{equation}
\label{e:discrete:dynamics}
x(t+1) \in F(x(t),u(t)),
\end{equation}
where 
$x$ is the time-discrete state signal (``salesman") 
taking values in $\mathbb{R}^n$, 
$u$ is the input signal and 
$F$ is a strict set-valued map.
In this dynamic setup 
we interpret the 
$N$ cities of the original problem as 
$N$ target sets in the state space $\mathbb{R}^n$. 
Optimality on the route is understood 
in terms of minimizing 
a prescribed cost functional, 
which is not subject to 
smoothness restrictions and 
may impose hard constraints. 

This generalization of the TSP 
is not an academic playground but
possible applications are divers. 
For example, a UAV reconnaissance mission, 
on the one hand, asks  
for the most efficient sequence 
to visit the areas of interest. 
On the other hand, the dynamical model 
of the vehicle is more complicated than 
reducing its motion to straight lines, 
not to mention obstacles or wind in the mission area. 
Fig.~\ref{fig:uav} illustrates such a mission 
in simulation for which the state-feedback controller 
will result from the contributions of the present work.
Before we outline them
we would like to give a brief literature overview
to the numerous works on the TSP and its variants.
Existing literature can be 
grouped in basically four categories. 
\subsubsection*{Early works -- TSP on networks}
The first category includes works 
which consider the original TSP on networks 
(more precisely, on ordinary 
directed or undirected graphs) and 
investigate algorithms or implementations 
for exactly or approximately solving the problem. 
The earliest works appear around 1955-65, 
e.g. utilizing linear programming 
\cite{DantzigFulkersonJohnson59} 
or dynamic programming 
\cite{Bellman61,
HeldKarp62}.
The Lin-Kernighan heuristic \cite{LinKernighan73} 
is still considered a milestone 
for solving the TSP. A maintained software library for it, 
the \begriff{Lin-Kernighan-Helsgaun solver}, 
is online available \cite{Helsgaun00,Helsgaun17}.
For a comprehensive survey on works 
of this decade see \cite{BellmoreNemhauser68}.
Simple variations of the TSP 
from this time period should also be mentioned, 
e.g. the multiple travelling salesmen problem 
\cite{BellmoreHong74,
Rao80} or the vehicle routing problem \cite{DantzigRamser59}.
\subsubsection*{Advanced variations of the TSP on networks}
Another group of works considers networks 
like the previous works 
but studies solution methods 
for more complex variations of 
the original problem statement.
There is the 
\begriff{Multi-depot Vehicle Routing Problem
with fixed distribution} 
\cite{LimWang05}, 
the \begriff{Heterogeneous Multi-depot Multiple-TSP} 
\cite{SundarRathinam15} or 
the \begriff{Flying Sidekick TSP} 
\cite{MurrayChu15}, 
just to mention a few. 
Others can be found in 
\cite{Bektas06,
ZhangWangYun09,
OberlinRathinamDarbha09,
OberlinRathinamDarbha09b}.
\subsubsection*{The TSP for vehicle dynamics}
A series of other works 
abandons the framework of networks
and poses the TSP for the case of 
vehicle dynamics. 
Therefore, classical solvers for the TSP 
cannot be directly applied to obtain an optimal route.
More concretely, targets are not 
connected by straight lines but 
connecting paths follow nonholonomic planar vehicle dynamics 
like Dubins vehicle \cite{SavlaFrazzoliBullo08,
LeNyFeronFrazzoli12,
AndersonMilutinovic13,
Babel20} 
or the Reed-Shepp vehicle \cite{MalikRathinamDarbhaJeffcoat06}. 
In \cite{GuCaoXieChenSun16} a 3-DOF aircraft model 
is considered and a solution method 
for the multiple-depot-multiple TSP is given, 
where also spatial obstacles are taken into account.
All these works consider indeed nontrivial dynamics, 
yet typically tailor their solutions 
for the particular motion characteristics they assume.
\subsubsection*{Other related works}
Approaching the state of the art from the side of 
motion planning and algorithmic controller synthesis
several works must also be mentioned.
In these contexts the optimality 
on the overall route has not been studied yet
but the underlying coverage specification, i.e. the requirement 
to visit $N$ target sets in any order. 
As an LTL formula, a coverage specification reads 
in its simplest form as\looseness=-1
\begin{equation*}
\diamond \pi_1 \wedge \diamond \pi_2 \wedge \dots \wedge \diamond \pi_N,
\end{equation*}
where $\pi_i$ is the proposition that is true 
if the salesman is in target $i$ \cite{FainekosKressGazitPappas05}.
This class of specifications has been investigated in
\cite{BhatiaKavrakiVardi10,
FainekosKressGazitPappas05,
FainekosGirardKressGazitPappas09}
for second order robot models. 
The special case of two target sets 
for quite general sampled-data control systems 
is investigated in 
\cite{ReissigWeberRungger17,
WeberKreuzerKnoll20,
WeberKnoll20}.

The contribution of this paper is 
a constructive method for 
synthesizing state-feedback controllers
enforcing the said coverage specification 
on plants possessing dynamics \eqref{e:discrete:dynamics}. 
Unlike in existing literature, 
we are allowing for quite general dynamics 
including uncertainties, hard state constraints and 
measurement errors in the closed loop. 
Moreover, the synthesis of the controller 
is performed in a fully automated fashion. 
The presented method is based on 
\begriff{Symbolic Controller Synthesis} 
as established in \cite{ReissigWeberRungger17} and 
the extension of 
\begriff{Symbolic Optimal Control} 
\cite{ReissigRungger18}. 
In addition, we utilize 
the \begriff{Lin-Kernighan-Helsgaun solver} 
\cite{Helsgaun00} 
on top of previous synthesis method
to heuristically determine 
the cheapest sequence for visiting the 
target sets. 
In our experiments we solve two 
Travelling Salesman Problems 
on sampled-data control systems whose 
continuous-time dynamics 
are governed by a differential inclusion.

The rest of this paper is organized as detailed below.
Section \ref{s:notation} contains basic notation. 
Section \ref{s:prelim} provides 
the basic formalism of Symbolic Optimal Control. 
Section \ref{s:tsp} is devoted to 
the rigorous definition of 
the TSP as considered herein.
The main results are presented in Section \ref{s:main}. 
Lastly, Section \ref{s:experiments} includes the simulation results
and Section \ref{s:conclusions} contains conclusions.
\begin{figure}
\centering
\includegraphics[scale=1]{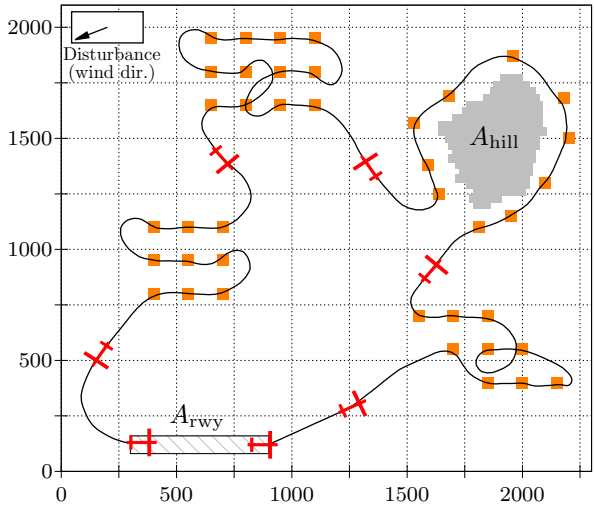}
\caption{\label{fig:uav}Motivating example. 
A UAV (modelled as a Dubins vehicle) shall visit all areas coloured in orange (\textcolor{orange}{$\blacksquare$}) subject to a) starting from and returning to the airfield/runway ($A_\mathrm{rwy}$), b) avoiding obstacles ($A_\mathrm{hill}$), c) minimizing a prescribed cost functional and d) withstanding disturbances. The shown closed-loop trajectory is deduced from the contributions of this work. Details to this example are given in Section \ref{ss:experiments:uav}.}
\vspace{-\baselineskip}
\end{figure}

\section{Notation}
\label{s:notation}
The field of real numbers is denoted by $\mathbb{R}$. 
The subset of non-negative real numbers, of integers and 
of non-negative integers is denoted by 
$\mathbb{R}_+$, $\mathbb{Z}$ and $\mathbb{Z}_+$, respectively. 
E.g. $\mathbb{Z}_+ = \{0,1,2,\ldots\}$. 
For $a, b \in \R$ the open, half-open and closed intervals with endpoints $a, b$
are denoted by $\intoo{a,b}$, $\intoc{a,b}$, $\intco{a,b}$ and $\intcc{a,b}$, respectively, 
and the discrete versions by $\intoo{a;b}$, $\intoc{a;b}$, $\intco{a;b}$ and $\intcc{a;b}$. 
E.g. $\intoc{a;b} = \intoc{a,b} \cap \Z$, $\intoc{1;2} = \{2\}$. 
The symbol $\emptyset$ stands for the empty set. 
The difference of two sets $A$ and $B$ is written as $A \setminus B$. 
The restriction of a function 
$f \colon A \to B$ to $C \subseteq A$ 
is denoted by $f|_C$ \cite{Hungerford74}. 
If $f \colon A \times B \to C$ then $f(\cdot,b)$ 
stands for the map $A \to C$, $a \mapsto f(a,b)$.
The notation $A \rightrightarrows B$ means 
that $f$ is a set-valued map with 
domain $A$ and image the subsets of $B$ \cite{RockafellarWets09}. 
If $f(a) \neq \emptyset$ for all $a \in A$ then $f$ is \begriff{strict}. 
$A^B$ denotes the set of all maps $B\to A$, e.g. an element of $\R^{\intcc{0;4}}$ is a $5$-tuple whose entries are in $\R$. A \begriff{signal} is a map in $A^{\Z_+}$.
\section{Preliminaries}
\label{s:prelim}
The purpose of this section is 
to define the notion of 
control loop and 
optimality on it 
in the version that 
is considered in this work. 
To be specific, the considered class of 
optimal control problems 
is defined in Section \ref{ss:ocp} and 
its solution in Section \ref{ss:ocpsolution}. 
To a large extend, the used concepts are adopted from
\cite{ReissigWeberRungger17,ReissigRungger18,WeberKnoll20}. 
\subsection{System dynamics and optimal control problem}
\label{ss:ocp}
Plants with time-discrete dynamics 
of the form \eqref{e:discrete:dynamics}
are considered, 
where $F \colon X \times U \rightrightarrows X$ is strict and 
$X$ and $U$ are non-empty sets. 
The triple 
\begin{equation}
\label{e:system}
(X,U,F)
\end{equation}
is called \begriff{(transition) system}
with \begriff{state} and 
\begriff{input space} $X$ and $U$, respectively.
Let $S$ denote \eqref{e:system}.
The dynamics \eqref{e:discrete:dynamics} 
induces a \begriff{behaviour initialized at} a state $p \in X$, 
which is the set of all signal pairs $(u,x) \in (U \times X)^{\Z_+}$ such that
$x(0) = p$ and \eqref{e:discrete:dynamics} holds for all $t \in \Z_+$. 
It is denoted by $\mathcal{B}_p(S)$.
In this work, a strict set-valued map
\begin{equation}
\label{e:controller}
\mu \colon \bigcup_{T\in \Z_+}\nolimits X^{\intcc{0;T}} \rightrightarrows U \times \{0,1\}
\end{equation}
is called \begriff{controller},
where the second component of the image, 
called \begriff{stopping signal}, 
indicates if the controller 
is in operation ('0') or 
is disabled ('1') \cite{ReissigRungger13}. 
The closed loop of a system \eqref{e:system} 
interconnected with a controller \eqref{e:controller} 
is formalized by 
the \begriff{closed-loop behaviour initialized at $p \in X$}, 
which is the set of 
all signals 
$(u,v,x) \in (U \times \{0,1\} \times X)^{\mathbb{Z}_+}$ 
satisfying
\begin{equation*}
(u,x) \in \mathcal{B}_p(S) \text{ and } \forall_{t\in \Z_+} : (u(t),v(t)) \in \mu(x_{\intcc{0;t}}).
\end{equation*} 
See Fig.~\ref{fig:controllergeneral} for an illustration. 
Previous set is denoted by $\mathcal{B}_p(\mu \times S)$ and 
the set of all controllers \eqref{e:controller} by $\mathcal{F}(X,U)$.

Operating the controller causes 
costs which depend on 
the time of operation.
Specifically, the cost functional 
\begin{equation}
\label{e:costfunctional:declaration}
J \colon (X \times \{0,1\} \times U)^{\mathbb{Z}_+} \to \mathbb{R}_+ \cup \{\infty\}
\end{equation}
is defined by $J(u,v,x) = \infty$ for infinite operation, 
i.e. if the stopping signal $v$ is identically zero, 
and otherwise by
\begin{equation}
\label{e:costfunctional:definition}
J(u,v,x) = G(x|_{\intcc{0;T}}) + \sum_{t = 0}^{T-1} g(x(t),x(t+1),u(t))
\end{equation}
with ingredients as follows:
\begin{itemize}
\item $T := \inf v^{-1}(1)$, so the first $0$-$1$ edge in $v$ makes the sum finite and defines the termination of the operation.
\item The \begriff{trajectory cost}
\begin{equation}
\label{e:trajectorycost}
G \colon \bigcup_{T \in \Z_+} X^{\intcc{0;T}} \to \mathbb{R}_+ \cup \{ \infty \} 
\end{equation}
rates the full trajectory until stopping.
\item 
The \begriff{running cost}
\begin{equation}
\label{e:runningcost}
g \colon X \times X \times U \to \mathbb{R}_+ \cup \{\infty\}
\end{equation}
occurs in between two consecutive points in time.
\end{itemize}
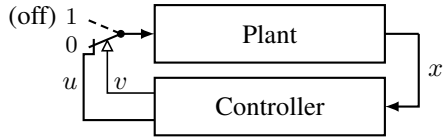
\begin{figure}
\centering
{
\newcommand{\xca}{-3.4}
\newcommand{\yca}{1}
\newcommand{\xcb}{3.4}
\newcommand{\ycb}{2.7}
\newcommand{\xcca}{\xca}
\newcommand{\ycca}{4.2}
\newcommand{\xccb}{\xcb}
\newcommand{\yccb}{5.3}
\newcommand{\xsa}{\xca}
\newcommand{\ysa}{3.15}
\newcommand{\xsb}{\xcb}
\newcommand{\ysb}{4.85}
\newcommand{\ofxa}{2.7}
\newcommand{\ofxb}{1.4}
\newcommand{\ofy}{.5}
\newcommand{\cli}{.4}
\newcommand{\sax}{\xsa-1.4}
\newcommand{\say}{\ysa/2+\ysb/2-.5}
\begin{tikzpicture}[scale=0.45,>=latex]
\draw[thick] (\xsa,\ysa) rectangle node[text width=3cm,align=center]{Plant} (\xsb,\ysb);
\draw[thick] (\xca,\yca) rectangle node[text width=3cm,align=center]{Controller} (\xcb,\ycb);
\draw[->,thick] (\xsb+1,\ysa/2+\ysb/2) |- (\xcb,\yca/2+\ycb/2) ;
\draw[-,thick]  (\xsb+1,\ysa/2+\ysb/2) -- (\xsb,\ysa/2+\ysb/2);
\draw[-,semithick] (\xca,\yca/2+\ycb/2+.4) -| (\sax,\say);
\draw[-,thick]     (\xca,\yca/2+\ycb/2-.4) -| (\xsa-2.1,\ysa/2+\ysb/2-.6) -| (\xsa-1.8,\ysa/2+\ysb/2-.15);
\draw[->,thick] (\xsa-1,\ysa/2+\ysb/2) -- (\xsa,\ysa/2+\ysb/2);
\draw[-,thick] (\xsa-1.95,\ysa/2+\ysb/2-\cli) -- (\xsa-1,\ysa/2+\ysb/2);
\draw[-,thick,densely dashed] (\xsa-1.95,\ysa/2+\ysb/2+\cli) -- (\xsa-1,\ysa/2+\ysb/2);
\draw[semithick] (-.15+\sax,0+\say) -- (.15+\sax,0+\say) -- (0+\sax,.3+\say) -- cycle;
\draw[fill=black] (\xsa-1,\ysa/2+\ysb/2) circle (.1cm);
\node at (\xsb+1.5,\ycb/2+\ysa/2) {$x$};
\node at (\xca-1,\yca/2+\ycb/2+.7) {$v$};
\node at (\xca-2.5,\yca/2+\ycb/2+.7) {$u$};
\node[left] at (\xsa-2.,\ysa/2+\ysb/2-.3) {\small $0$};
\node[left] at (\xsa-2.,\ysa/2+\ysb/2+.5) {(off) \small $1$};
\end{tikzpicture}
}
\caption{\label{fig:controllergeneral}Basic closed loop scheme in Symbolic Optimal Control \cite{ReissigRungger18} with state signal $x$, stopping signal $v$ and input signal $u$.}
\vspace{-\baselineskip}
\end{figure}
The function \eqref{e:costfunctional:declaration} 
is called \begriff{total cost}.
Altogether, the following compact form 
for an optimal control problem 
can be given \cite{ReissigRungger18,WeberKnoll20}.
\begin{definition}
\label{def:ocp}
Consider system \eqref{e:system} and let 
$G$ and $g$ be as in \eqref{e:trajectorycost} and \eqref{e:runningcost}, 
respectively. 
The quintuple
\begin{equation}
\label{e:ocp}
(X, U, F, G, g)
\end{equation}
is called \begriff{optimal control problem}.
\end{definition}
We are interested in finding a controller 
so that the cost for operating 
the closed loop is \emph{finite} and 
ideally minimized for the worst case. 

Returning back to the context of the TSP, 
in Section \ref{s:tsp} we are going to 
define $G$ in \eqref{e:trajectorycost} such that 
it takes the value $\infty$ 
if one target set is missed and 
otherwise $0$. 
Additionally, we wish 
to minimize \eqref{e:costfunctional:declaration} 
along the route by appropriately selecting 
the order of visiting the target sets.

Next, we formalize the aforementioned notion of ``worst case"
and the \begriff{value function} of an optimal control problem.
\subsection{Suboptimal and optimal solutions}
\label{ss:ocpsolution}
Let us relate the optimal control problem \eqref{e:ocp} 
to a \begriff{suboptimal} and \begriff{optimal} solution.
\begin{definition}
\label{def:performance}
Let $\Pi$ be an optimal control problem of the form \eqref{e:ocp}
and let $J$ be the total cost 
\eqref{e:costfunctional:declaration} as defined for $\Pi$.
The map $L$ assigning $(p, \mu) \in X \times \mathcal{F}(X,U)$ to 
\begin{equation}
\label{e:closedloopperformance}
L(p,\mu) := \sup_{(u,v,x) \in \mathcal{B}_p(\mu \times S)} J(u,v,x)
\end{equation}
is called \begriff{performance function} of $\Pi$.  
The map $L(\cdot,\mu)$ is called \begriff{closed-loop performance} of $\mu$.
\end{definition}
Now, the notion of optimality can be defined \cite[Sect.~III.A,VI.C]{ReissigRungger18}.
\begin{definition}
\label{def:valuefunction}
Let $\Pi, L$ be as in Definition \ref{def:performance}.
The \begriff{value function} of $\Pi$ is the map 
$V \colon X \to \mathbb{R}_+ \cup \{\infty\}$ defined by 
\begin{equation*}
V(p) = \inf_{\mu \in \mathcal{F}(X,U)} L(p,\mu).
\end{equation*}
A controller $\mu \in \mathcal{F}(X,U)$ is called \begriff{optimal} 
if $V = L(\cdot, \mu)$. 
An \begriff{optimal solution} of $\Pi$ is such a pair $(V,\mu)$. 
\end{definition}
Suboptimal solutions as defined next 
will also turn out to be satisfactory in applications.
Loosely speaking, a controller is \begriff{suboptimal} if
its closed-loop performance is finite at every state
at which the value function is finite.
\begin{definition}
Let $\Pi$, $L$ and $V$ be as in Definition \ref{def:valuefunction}. Let $A \subseteq X$. 
A controller $\mu \in \mathcal{F}(X,U)$ \begriff{solves $\Pi$ suboptimally on $A$}
if for all $p \in A$ it holds that $V(p) < \infty \implies L(p,\mu) < \infty$.
\end{definition}
\section{\TSP}
\label{s:tsp}
We proceed with the rigorous definition of 
the \TSP{}
as it is considered in this work.
Before that we recall the rigorous formulation of 
the classical TSP on digraphs and its solution \cite{BellmoreNemhauser68}.

\subsection{The classical TSP formulation}
\newcommand{\toursymbol}{\operatorname{Tour}}
Firstly, we define a \begriff{tour}, 
which formalizes the order of 
visiting the $N$ cities 
together with the base city.
In the next definitions, 
the integers $1,\ldots,N$ represent the cities to visit. 
We fix index $1$ for the base city.
\begin{definition}
\label{def:tour}
Let $N \in \mathbb{Z}$, $N \geq 2$.
A finite sequence \[(1,t_2,\ldots,t_N,1)\] where $t_i \in \intcc{2;N}$ and $t_i \neq t_j$ for all $i,j \in \intcc{2;N}$, $i\neq j$ is called \begriff{tour (of length $N$)}.
\end{definition}
For example, for $N=3$ the tour $(1,3,2,1)$ means to visit city $3$, 
then city $2$ and then returning to the base city. 
In general, there are $(N-1)!$ possible tours. Below, the entry $(i,j)$ of the matrix $C$ is the cost to travel from city $i$ to city $j$.\looseness=-1
\begin{definition}
\label{def:classicalTSP}
Let $N$ be as in Definition \ref{def:tour}.
A \begriff{classical \MakeLowercase{\TSP{}}} 
on a weighted directed graph is a tuple $(N,C)$, 
where $C \in \mathbb{R}^{N \times N}$. 
An \begriff{optimal solution of $(N,C)$} is an element of the set
\begin{equation*}
\argmin \Big \{  \sum_{i=1}^N\nolimits C_{t(i),t(i+1)} \ \big | \ t \colon \text{tour of length } N \Big \}.
\end{equation*}
\end{definition}
Next, we transfer previous problem formulation to the continuous 
and dynamic setup that we are considering.
\subsection{\TSP{} formulation in this work}
\label{ss:tsp:definition}
We would like to address some technical details about the 
problem specification to be defined next. 

Firstly, the classical problem formulation
prohibits to visit a city twice except for the base city. 
We will not adopt this requirement 
in favour of a clearer presentation. 
However, that restriction can be easily 
added to our definition.
Secondly, we require the salesman not only 
to visit the targets but also to 
avoid obstacles in the (continuous) state space during travelling. 
This requirement can be encoded in the running cost \eqref{e:runningcost} 
by letting $g$ satisfy $g(x,y,u) = \infty$ whenever $x$ is an element of the obstacle set \cite[Ex.~III.5]{ReissigRungger18}.

In the following problem definition the sets $A_1,\ldots,A_N$ 
take the role of the $N$ cities, where $A_1$ corresponds to the base city (depot). 
Unlike in Definition \ref{def:classicalTSP} 
the cost to travel from $A_i$ to $A_j$ 
is naturally not explicitly given but is determined 
by summing up the running cost $g$ (see \eqref{e:costfunctional:definition}). 
\begin{definition}
\label{def:tsp}
Let $\Pi$ be an optimal control problem 
of the form \eqref{e:ocp} such that $G$
is defined by 
\begin{equation*}
G(x|_{\intcc{0;t}}) = \begin{cases}
0 & \text{if condition \eqref{e:condition} holds }  \\
\infty & \text{otherwise}
\end{cases}
\end{equation*}
where the involved condition is
\begin{equation}
\label{e:condition}
\tag{$\ast$}
\left ( 
\forall_{s \in \{0,t\} } : 
x(s) \in A_1 \right ) 
\wedge 
\left ( 
\forall_{i \in \intoc{1;N}} \exists_{s \in \intcc{0;t}} : 
x(s) \in A_i 
\right )
\end{equation}
with non-empty sets $A_1,\ldots, A_N \subseteq X$.
Then $\Pi$ is called 
\begriff{\TSP{}} with 
\begriff{target sets} $A_1,\ldots, A_N$ 
and \begriff{depot} $A_1$.
\end{definition}
Subsequently, if $S$ denotes the system $(X,U,F)$ then 
\begin{equation*}
\operatorname{TSP}_{S,g}(A_1,\ldots,A_N)
\end{equation*}
stands for the \TSP{} $(X,U,F,G,g)$ with target sets $A_1,\ldots,A_N$ and depot $A_1$.
\section{Controller synthesis algorithm}
\label{s:main}
In this section, our main contributions are presented, 
whose core is given in
Fig.~\ref{fig:hybridcontroller}.
The algorithm is 
presented first and then some remarks 
on implementation and application
to sampled-data control systems are discussed. 
\subsection{Statement and properties of the algorithm}
Before discussing the algorithm, 
we recall quantitative reach-avoid problems 
as they play an important role in the algorithm. 
Roughly speaking,
the key idea is to split the TSP into a sequence of 
special quantitative reach-avoid problems and 
to use an optimality result in \cite{WeberKnoll20}.
\begin{definition}[\!\!\cite{WeberKnoll20}]
\label{def:reachavoid}
Let $\Pi$ be of the form \eqref{e:ocp} such that 
\begin{equation*}
G(x|_{\intcc{0;t}}) = \begin{cases}
G_0(x(t)), & \text{if } x(t) \in A \\
\infty, & \text{otherwise}
\end{cases}
\end{equation*}
defines $G$, where $G_0 \colon X \to \mathbb{R}_+ \cup \{ \infty\}$ and 
$A \subseteq X$ is a non-empty set.
Then $\Pi$ is called \begriff{(quantitative) reach-avoid problem} 
associated with $A$ and $G_0$.
\end{definition}
Below, if $S$ denotes the system $(X,U,F)$ then 
\begin{equation}
\label{e:reachavoid}
\operatorname{Reach}_{S,g}(A,G_0)
\end{equation}
stands for the reach-avoid problem 
$(X,U,F,G,g)$ associated with $A$ and $G_0$.
An optimal controller for \eqref{e:reachavoid} 
can be represented as a strict set-valued map $X \rightrightarrows U \times \{0,1\}$ \cite{ReissigRungger18}.
\makeatletter
\newcommand{\algcolor}[2]{%
  \hskip-\ALG@thistlm\colorbox{#1}{\parbox{\dimexpr\linewidth-2\fboxsep}{\hskip\ALG@thistlm\relax #2}}%
}
\newcommand{\algemph}[1]{\algcolor{gray!20}{#1}}
\makeatother
\begin{figure}
\begin{subfigure}{.48\textwidth}
\hrule{}
\vspace{1ex}
\begin{algorithmic}[1]
\Input{$\operatorname{TSP}_{S,g}(A_1,\ldots,A_N)$}
\State{\label{alg:queue}$Q \gets \{ 1,\ldots, N \}$\hfill{}/\!/~Initialize a queue}
\State{$(A_1',\ldots,A_N') \gets (A_1,\ldots,A_N)$\hfill{}/\!/~Subsets of the targets}
\While{\label{alg:while}$Q\neq \emptyset$}
\State{Pick $i \in Q$}
\State{$Q \gets Q \setminus \{i\}$}
\State{\label{alg:valuefunction}$V_i \gets$ value function of $\operatorname{Reach}_{S,g}(A'_i,0)$} 
\ForAll{$j \in Q$}
\If{$A'_j \setminus V_i^{-1}(\infty) = \emptyset$}
\State{\Return{``Problem can't be solved"}}
\ElsIf{$A'_j \neq A'_j \setminus V_i^{-1}(\infty)$}
\State{$A'_j \gets A'_j \setminus V_i^{-1}(\infty)$ \hfill{}/\!/~Shrink $A'_j$}
\State{$Q \gets Q \cup \{j\}$}
\EndIf{}
\EndFor{}
\EndWhile{\label{alg:while:end}}
\State{\label{alg:tsp}\algemph{$\toursymbol \gets \ $solution of classical TSP $(N,C)$, where \mbox{$C \in \mathbb{R}_+^{N \times N}$} s. that $\forall_{i,j} : C_{i,j} = \min\{V_j( p )\mid p \in A'_i\}$}}
\ForAll{\label{alg:for:controller}$i \in \intcc{1;N}$}
\State{\label{alg:controller}\algemph{$\mu_i \gets$ \footnotesize{}optim. controller of 
$\operatorname{Reach}_{S,g}(A'_{\toursymbol(i)}, V_{\toursymbol(i+1)})$}}
\EndFor{\label{alg:endminusone}}
\State{\Return{$\toursymbol$ and $\mu_1,\ldots,\mu_N$} and $A'_1$}
\end{algorithmic}
\hrule{}
\vspace{1ex}
\caption{\label{fig:hybridcontroller:algorithm}Controller synthesis algorithm. The highlighted lines heuristically optimize the closed-loop performance of the resulting controller in Fig.~\ref{fig:hybridcontroller:formula}.}
\end{subfigure}
\begin{subfigure}{.48\textwidth}
\hrule{}
\vspace{1ex}
\begin{algorithmic}[1]
\Input{$x \in X$}
\Require{\textbf{global integer} $i = 1$}
\If{\label{fig:hybridcontroller:formula:if}$\mu_{\toursymbol(i)}(x)_2 \neq \{0\}$ \textbf{and} $i \leq N$}
\State{$i \gets i + 1$}
\EndIf{\label{fig:hybridcontroller:formula:endif}}
\State{\Return{$\mu_{\toursymbol(i)}(x)$}}
\end{algorithmic}
\hrule{}
\vspace*{1ex}
\caption{\label{fig:hybridcontroller:formula}
Mapping rule of the proposed controller involving the outputs $\operatorname{Tour}$ and $\mu_1,\ldots, \mu_N$ of the algorithm 
in Fig.~\ref{fig:hybridcontroller:algorithm}; The integer $i$ is initialized to $1$ and resides in memory throughout operation.}
\end{subfigure}
\\[.1em]
\begin{subfigure}{.48\textwidth}{
\centering
\newcommand{\xca}{-3.}
\newcommand{\yca}{-.3}
\newcommand{\ycb}{.7}
\newcommand{\xcb}{3.}
\newcommand{\xsa}{\xca}
\newcommand{\ysa}{1.5}
\newcommand{\xsb}{\xcb}
\newcommand{\ysb}{2.9}
\newcommand{\xma}{-8.2}
\newcommand{\xmb}{-7.0}
\newcommand{\ymb}{7}
\newcommand{\ofs}{1.4} 
\newcommand{\ofss}{-5.1} 
\newcommand{\ofsw}{1.4} 
\begin{tikzpicture}[scale=0.55,>=latex]
\draw[thick] (\xma,\ysa+\ofss) rectangle node[rotate=90,text width=3cm,align=center]{switching logic} (\xmb,\ymb+\ofss-.2);
\draw[thick] (\xsa,\ysa) rectangle node[text width=3cm,align=center]{Plant $S$} (\xsb,\ysb);
\draw[thick] (\xca,\yca) rectangle node[text width=3cm,align=center]{Controller $\mu_1$} (\xcb,\ycb);
\draw[thick] (\xca,\yca-\ofs) rectangle node[text width=3cm,align=center]{Controller $\mu_2$} (\xcb,\ycb-\ofs);
\draw[thick] (\xca,\yca-2.7*\ofs) rectangle node[text width=3cm,align=center]{Controller $\mu_N$} (\xcb,\ycb-2.7*\ofs);
\coordinate (v1) at (\xsb,\ysa/2+\ysb/2) ;
\coordinate (v2) at (\xsb+\ofsw,\ysa/2+\ysb/2) ;
\coordinate (v3) at (\xsb+\ofsw,\yca/2+\ycb/2) ;
\coordinate (v4) at (\xsb,\yca/2+\ycb/2) ;
\coordinate (v5) at (\xsb+\ofsw,\yca/2+\ycb/2 - \ofs) ;
\coordinate (v6) at (\xsb,\yca/2+\ycb/2 - \ofs) ;
\coordinate (v8) at (\xsb+\ofsw,\yca/2+\ycb/2 - 2.*\ofs) ;
\coordinate (v9) at (\xsb+\ofsw,\yca/2+\ycb/2 - 2.7*\ofs) ;
\coordinate (v10) at (\xsb,\yca/2+\ycb/2 - 2.7*\ofs) ;
\coordinate (v11) at (\xsb,\ysa/2+\ysb/2+\ofss) ;
\coordinate (v12) at (\xsa,\yca/2+\ycb/2);
\coordinate [draw,circle,scale=0.4] (v13) at (\xsa-\ofsw,\yca/2+\ycb/2);
\coordinate (v14) at (\xsa,\yca/2+\ycb/2-\ofs);
\node [draw,circle,scale=0.4] (v15) at (\xsa-\ofsw,\yca/2+\ycb/2-\ofs) {};
\coordinate (v16) at (\xsa,\yca/2+\ycb/2-2.7*\ofs) ;
\coordinate [draw,circle,scale=0.4] (v17) at (\xsa-\ofsw,\yca/2+\ycb/2-2.7*\ofs);
\node [draw,circle,scale=0.4] (v18) at (-5.75,-2) {};
\coordinate (v20) at (\xsa,\ysa/2+\ysb/2);
\coordinate (v21) at (-5,-1.75) {} {}; 
\coordinate (v22) at (\xma/2+\xmb/2,-3.9) {} ; 
\coordinate (v23) at (-6.,\ysa/2+\ysb/2);
\coordinate (v24) at (\xma/2+\xmb/2,\ymb+\ofss-.2);
\coordinate (v25) at (-5.75,\ysa/2+\ysb/2);
\coordinate (v26) at (\xma/2+\xmb/2,\ysa+\ofss);
\coordinate (v27) at (-6.25,\ysa/2+\ysb/2) {};
\coordinate (v28) at (-6.25,-2.05) {};
\coordinate (v29) at (-6.1,\ysa/2+\ysb/2) {};
\coordinate (v30) at (-6.1,-1.95) {};
\coordinate (v31) at (-5.85,-2.05);
\coordinate (v32) at (-5.85,-1.95);
\coordinate (v33) at (\xsa-\ofsw+.2,\yca/2+\ycb/2);
\coordinate (v34) at (\xsa-\ofsw+.1,\yca/2+\ycb/2-2.7*\ofs+.1);
\coordinate (v35) at (\xsa-\ofsw,\yca/2+\ycb/2-\ofs-.15);
\draw[<-,thick] (v24) |- (v27) -- (v28) -- (v31);
\draw[->] (v32) -- (v30) |- (v20);
\draw[->] (v1) -- (v2) -- (v3) -- (v4);
\draw[->] (v3) -- (v5) -- (v6);
\draw[->] (v8) -- (v9) -- (v10);
\draw[-] (v12) -- (v13);
\draw[-] (v14) -- (v15);
\draw[-] (v16) -- (v17);
\draw[thick,-] (v18) -- (v35);
\draw[dashed,thick,-] (v18) -- (v33);
\draw[dashed,thick,-] (v18) -- (v34);
\draw[thick,->] (v26) -- (v22) -| (v21); 
\node at (\xsa/2+\xsb/2,\yca/2+\ycb/2-1.75*\ofs) {$\vdots$};
\node at (-3.75,\yca/2+\ycb/2-1.75*\ofs) {$\vdots$};
\node at (\xsb+\ofsw,-1.75) {$\vdots$};
\node at (3.5,\ysb-.4) {$x$};
\node at (-4.5,\ysb-.4) {$u$};
\node at (-7,\ysb-.4) {$v$};
\node at (-3.75,0.6) {\footnotesize$(u,v)$};
\node at (-3.75,0.6-\ofs) {\footnotesize$(u,v)$};
\node at (-3.75,0.6-5*\ofs/2-.35) {\footnotesize$(u,v)$};
\end{tikzpicture}
\\[.1em]
}
\caption{\label{fig:hybridcontroller:illustration}Illustration of the closed loop. The block ``switching logic" corresponds to lines \ref{fig:hybridcontroller:formula:if}--\ref{fig:hybridcontroller:formula:endif} in Fig.~\ref{fig:hybridcontroller:formula}.}
\end{subfigure}
\caption{\label{fig:hybridcontroller}Proposed controller synthesis algorithm and resulting controller for solving the Travelling Salesman Problem.}
\end{figure}
Before rigorously formulating the properties 
of the algorithm in Fig.~\ref{fig:hybridcontroller:algorithm},
a rough description is given.

\textbf{\textit{First part (lines \ref{alg:queue}--\ref{alg:while:end}).}} 
This part of the algorithm is a fixed-point iteration. 
In case of success, i.e. in case line \ref{alg:tsp} is reached, 
non-empty subsets $A_i'$ of the target sets $A_i$ are found
for each $i \in \intcc{1;N}$ such that the following holds:
For every $i,j \in \intcc{1;N}$, $i \neq j$ 
an optimal controller for $\operatorname{Reach}_{S,g}(A_j',0)$ successfully steers 
any state in $A_i'$ to $A_j'$. 
In other words, the underlying coverage specification is solved.

\textbf{\textit{Second part (line \ref{alg:tsp}).}} 
This part heuristically optimizes the order of 
visiting the target sets as follows. 
The cost for reaching $A_j'$ starting from $A_i'$ 
is optimistically estimated using 
the previously calculated value function $V_j$ and 
stored in the entry $(i,j)$ of the matrix $C$. 
The resulting classical TSP is then solved. 
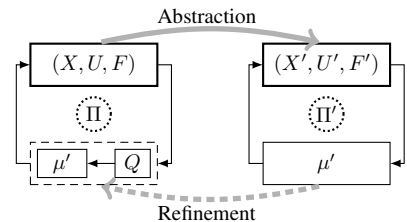
\begin{figure}
\centering
\usetikzlibrary{arrows}
\begin{tikzpicture}[scale=.75, every node/.style={scale=.8}]
\draw[thick]  (-3.75,3.25) rectangle (-1.5,2.5) node[pos=.5] {$(X,U,F)$};
\draw[thick]   (0.375,3.25) node (v2) {} rectangle (2.625,2.5) node[pos=.5] {$(X',U',F')$};
\draw[-latex] (-1.5,2.875) -- (-1.25,2.875) -- (-1.25,1.125) -- (-1.5,1.125);
\draw  (-2.25,1.375) rectangle (-1.625,0.875) node [pos=.5] (v4) {$Q$};
\draw  (-3.625,1.375) rectangle (-2.75,0.875) node[pos=.5] {$\mu'$};
\draw[thick,densely dotted] (-2.625,2) circle (0.3) node {$\Pi$};
\draw[thick,densely dotted] (1.5,2) circle (0.3) node {$\Pi'$};
\draw[-latex] (-2.25,1.125) -- (-2.75,1.125);
\draw[-latex] (-3.75,1.125) -- (-4,1.125) -- (-4,2.875) -- (-3.75,2.875);
\node (v1) at (-2.625,3.25) {};
\node (v4) at (1.5,3.25) {};
\draw[gray!60!white,line width=2][->]  (v1) edge [ bend angle=12,bend left] (v4);
\node at (-0.625,3.75) {Abstraction};
\draw  (0.375,1.5) rectangle (2.625,0.75) node[pos=.5] {$\mu'$};
\draw[-latex] (2.625,2.875) -- (2.875,2.875) -- (2.875,1.125) -- (2.625,1.125);
\draw[-latex] (0.375,1.125) -- (0.125,1.125) -- (0.125,2.875) -- (0.375,2.875);
\node (v3) at (0.375,0.75) {};
\node (v5) at (1.5,0.75) {};
\node (v6) at (-2.625,0.75) {};
\draw[gray!60!white,line width=2,densely dashed][->]  (v5) edge [ bend angle=12,bend left] (v6);
\node at (-0.625,0.25) {Refinement};
\node (v7) at (-1.5,0.75) {};
\draw[densely dashed]  (-3.75,1.5) rectangle (v7);
\end{tikzpicture}
\caption{\label{fig:symboliccontrol}
Principle of symbolic controller synthesis \cite{ReissigWeberRungger17,WeberKnoll20}.}
\vspace{-3ex}
\end{figure} 

\textbf{\textit{Third part (lines \ref{alg:for:controller}--\ref{alg:endminusone}).}} 
In line \ref{alg:controller}
controllers that are optimal for visiting two target sets in succession
are calculated by utilizing \cite[Th.~III.1]{WeberKnoll20}. 
More concretely, assuming, e.g., $\toursymbol(N) = N$ then $\mu_N$ is \emph{optimal} for visiting \underline{first} $A_N$ \underline{and then} $A_1$. This is why the matrix $C$ contains \emph{optimistic} estimates. 
In general, $\mu_N$ does not coincide 
with an optimal controller for 
$\operatorname{Reach}_{S,g}(A_N,0)$ \cite[Sect.~IV]{WeberKnoll20}.
This step significantly reduces the total cost in practice. 
See Section \ref{ss:experiments:uav}.\looseness=-1

The formal properties of the algorithm are stated next.
\begin{theorem}
Let $S$ be a system of the form \eqref{e:system}, 
let $g$ be as in \eqref{e:runningcost} and 
let $A_1,\ldots,A_N \subseteq X$ be non-empty sets, $N>1$. 
Let $A_1'$ and 
$\mu_1, \ldots, \mu_N$ 
be obtained by 
the algorithm shown in Fig.~\ref{fig:hybridcontroller:algorithm}
when applied to $\Pi:=\operatorname{TSP}_{S,g}(A_1,\ldots,A_N)$.
The controller $\bm{\mu}\in \mathcal{F}(X,U)$
defined by Fig.~\ref{fig:hybridcontroller:formula} 
solves $\Pi$ suboptimally on $A_1'$.
\end{theorem}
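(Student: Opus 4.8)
The plan is to prove the stronger claim that $L(p,\bm\mu)<\infty$ for \emph{every} $p\in A_1'$; this gives suboptimality on $A_1'$ at once, since then $V(p)<\infty\implies L(p,\bm\mu)<\infty$ holds vacuously. The first step is to record the invariant produced by the fixed-point iteration of lines \ref{alg:queue}--\ref{alg:while:end}: the loop can only reach line \ref{alg:tsp} once no set $A_j'$ changes, i.e. once $A_j'\cap V_i^{-1}(\infty)=\emptyset$ for all ordered pairs $i\neq j$, where $V_i$ is the value function of $\operatorname{Reach}_{S,g}(A_i',0)$. Equivalently, $V_i$ is finite on all of $A_j'$, and all $A_i'$ are non-empty (otherwise line \ref{alg:tsp} is never reached). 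By Definition \ref{def:performance} and the static representation of optimal reach-avoid controllers this is the \emph{coverage guarantee}: from any state of $A_j'$ an optimal controller robustly steers the plant into $A_i'$ with finite worst-case running cost, i.e. along every signal of the corresponding closed loop. Establishing this invariant rigorously -- that the termination of the iteration forces the pairwise condition to hold \emph{simultaneously} for the final subsets -- is itself delicate and constitutes the substance of the first step.

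The second step reads the controller of Fig.~\ref{fig:hybridcontroller:formula} as a sequential composition of the optimal reach-avoid controllers along the tour. Writing $\toursymbol=(\toursymbol(1),\dots,\toursymbol(N+1))$ with $\toursymbol(1)=\toursymbol(N+1)=1$, leg $k$ activates the controller computed in line \ref{alg:controller} whose target set is $A'_{\toursymbol(k)}$; the global counter $i$ selects the active leg and is advanced exactly when the active controller offers its stopping symbol, i.e. the guard $\mu_{\toursymbol(i)}(x)_2\neq\{0\}$, which happens once its target has been reached. I would prove by induction on $k=1,\dots,N+1$ the assertion $P(k)$: along every closed-loop signal $(u,v,x)\in\mathcal B_p(\bm\mu\times S)$, leg $k$ terminates at a finite time $t_k$ with $x(t_k)\in A'_{\toursymbol(k)}$ and with finite running cost accrued up to $t_k$. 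The base case $P(1)$ is immediate: since $x(0)=p\in A_1'=A'_{\toursymbol(1)}$ is already in the target of $\mu_1$, the stopping symbol is available at $t=0$, the counter advances, and $t_1=0$.

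For the inductive step $P(k)\Rightarrow P(k+1)$ (with $k\le N$), leg $k+1$ starts from $x(t_k)\in A'_{\toursymbol(k)}$ and runs the optimal controller, computed in line \ref{alg:controller}, with target set $A'_{\toursymbol(k+1)}$ and a finite look-ahead terminal cost (for the final step $k=N$ this is the reused $\mu_1$, whose target $A'_{\toursymbol(N+1)}$ equals the depot $A_1'$). By the coverage guarantee the value function of this reach-avoid problem is finite on $A'_{\toursymbol(k)}$: one reaches $A'_{\toursymbol(k+1)}$ with finite running cost and lands where the terminal term is finite, again by coverage. Invoking the optimality result \cite[Th.~III.1]{WeberKnoll20}, the performance of the active controller equals this finite value, so \emph{every} closed-loop signal reaches $A'_{\toursymbol(k+1)}$ at a finite time $t_{k+1}$ with finite accrued running cost (non-negativity of the terminal cost bounds the running part by the value). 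At that moment the stopping symbol becomes available and the counter advances, yielding $P(k+1)$. Applying $P(N+1)$, every closed-loop signal returns to $A_1'$ at a finite time after having visited $A'_{\toursymbol(2)},\dots,A'_{\toursymbol(N)}$, a permutation of the remaining targets; since the counter cannot exceed $N+1$ (guard $i\le N$), the active controller $\mu_1$ keeps offering its stopping symbol at the depot, so $v$ attains $1$ and $T=\inf v^{-1}(1)<\infty$. As the trajectory starts and ends in $A_1$ and meets every $A_i$, condition \eqref{e:condition} holds, hence $G=0$ and the total cost equals the finite sum of the finitely many finite leg costs. Taking the supremum in \eqref{e:closedloopperformance} gives $L(p,\bm\mu)<\infty$, as claimed.

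The step I expect to be the main obstacle is the inductive hand-off in the worst-case (set-valued) setting: one must argue that the optimal reach-avoid controller of each leg drives \emph{every} admissible trajectory into the next target set in finite time, and that the uniformity of the coverage guarantee over the \emph{whole} set $A'_{\toursymbol(k)}$ makes the finiteness of the next leg's value independent of the disturbance-dependent landing state. A related technical point is to make the sequential composition precise, i.e. to show that $\mathcal B_p(\bm\mu\times S)$ genuinely decomposes at the switching times defined by the stopping symbols into the concatenation of the individual reach-avoid closed loops, and that the intermediate terminal terms $V_{\toursymbol(k+1)}$, which only inflate the per-leg bound, are never charged against the overall cost once $G=0$.
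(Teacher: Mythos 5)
Your proposal is correct and follows essentially the same route as the paper's own proof: extract from the termination of the fixed-point iteration the invariant that every $V_i$ is finite on every $A'_j$, then induct along the tour using the finiteness of each leg's value function to conclude that every closed-loop signal visits all $A'_{\toursymbol(k)}$ and returns to $A'_1$ in finite time, whence $G=0$ and $L(p,\bm{\mu})<\infty$. The only notable difference is presentational: the paper first reduces to the trivial tour with zero terminal costs (``does not change the statement of the theorem'') while you argue directly with the controllers for $\operatorname{Reach}_{S,g}(A'_{\toursymbol(i)},V_{\toursymbol(i+1)})$, which makes the hand-off between legs slightly more explicit but does not change the substance of the argument.
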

\begin{proof}
When the algorithm 
arrives at line \ref{alg:tsp} 
the value functions $V_i$ are finite on $A'_j$ 
for all $i,j \in \intcc{1;N}$. 
Then note that changing line \ref{alg:tsp} 
to the trivial tour
\begin{equation*}
\toursymbol \gets (1,2,\ldots,N,1)
\end{equation*}
and line \ref{alg:controller} to
\begin{equation}
\label{e:lineofalgorithm}
\mu_i \gets \text{optimal controller of } \operatorname{Reach}_{S,g}(A'_{\toursymbol{}(i)},0)
\end{equation}
does not change 
the statement of the theorem. 
Denote by $\Pi_i$ the optimal control problem 
involved in \eqref{e:lineofalgorithm}. 
Note that $V_i = L_i(\cdot,\mu_i)$ for all $i \in \intcc{1;N}$, 
where $L_i$ is the performance function of $\Pi_i$.
Let $p \in A'_1$ and 
$(u,v,x) \in \mathcal{B}_p(\bm{\mu} \times S)$. 
Then $1 \in \bm{\mu}(x|_{\intcc{0;t}})_2 = \mu_2(x|_{\intcc{0;t}})_2$ for some $t\in \Z_+$
since $\mu_2$ solves $\Pi_2$ optimally. 
So $x(t) \in A'_2$. 
By induction,
for all $i \in \intcc{2;N}$ 
there exists $s \in \Z_+$ such that $x(s) \in A'_i$. 
Since $\mu_1$ solves $\Pi_1$ optimally the proof is completed.
\end{proof}
The controller $\bm{\mu}$ and the resulting closed loop are 
illustrated in Fig.~\ref{fig:hybridcontroller:illustration}.
\subsection{Implementation and application of the algorithm}
We would like to comment on 
implementing the algorithm 
in Fig.~\ref{fig:hybridcontroller:algorithm}.
First, for solving the classical TSP in line \ref{alg:tsp} 
we propose the use of 
the \begriff{Lin-Kernighan-Helsgaun solver} \cite{Helsgaun00}.
Clearly, any other solver can also be used.
Second, lines \ref{alg:valuefunction} and \ref{alg:controller}
require to solve quantitative reach-avoid problems on the system $S$.
For the case that the state and input spaces of $S$ are \emph{finite}
such algorithms exist \cite{ReissigRungger18,MacoveiciucReissig19,WeberKreuzerKnoll20}.

Consequently, in combination with the 
principle of \begriff{symbolic controller synthesis} 
\cite{ReissigWeberRungger17,ReissigRungger18} 
our synthesis algorithm can be applied 
to sampled-data control systems, whose 
dynamics are of the form
\begin{equation}
\label{e:system:cont}
\dot \xi(t) \in f(\xi(t),u(t)) + W.
\end{equation} 
In \eqref{e:system:cont}, 
$\xi$ is the state signal, 
$u$ is the input signal taking values in 
$U \subseteq \mathbb{R}^m$, 
$f\colon \mathbb{R}^n \times U \to \mathbb{R}^n$ and 
$W \subseteq \mathbb{R}^n$ is a set accounting for
disturbances.
By means of sampling a formulation as 
a system \eqref{e:system} with $X=\mathbb{R}^n$ 
is possible under certain assumptions on $f$ and $W$ 
\cite[Sect.~VIII.A]{ReissigWeberRungger17}.

The approach to synthesize controllers for 
an optimal control problem associated with 
a sampled system is as follows \cite{ReissigRungger18}.
The original optimal control problem $\Pi = (X,U,F,G,g)$ 
is transferred to 
an \begriff{abstract} optimal control problem 
$\Pi' = (X',U',F',G',g')$.
The involved \begriff{discrete abstraction} $(X',U',F')$ of 
the sampled system associated with \eqref{e:system:cont}
has \emph{finite} state and input space, $X'$ and $U'$. 
(Typically, $X'$ is a cover of $X$ such that most of its elements
are translations of $\intcc{0,\eta_1} \times \ldots \times \intcc{0,\eta_n}$. 
The vector $\eta \in \mathbb{R}_+^n$ is called \begriff{grid parameter}, 
which will be mentioned later in the experimental results.)
In the case that $\Pi'$ can be solved 
the obtained controller $\mu'$ for $\Pi'$ is
refined to a controller for $\Pi$. 
This refinement step requires for 
the methodology of \cite{ReissigWeberRungger17} only 
the interconnection with a simple quantizer. 
See Fig.~\ref{fig:symboliccontrol}.\looseness=-1
\section{Experimental results}
\label{s:experiments}
Subsequently, two examples are presented, 
which greatly demonstrate 
the powerfulness of the proposed heuristic. 
Both examples are related to ``vehicles"
since \TSPs{}
naturally are most intuitive 
when the target sets 
have a spatial component.
Nevertheless it is worth pointing out  
once more that the presented method 
can be applied to 
any transition system with dynamics \eqref{e:discrete:dynamics}.
\subsection{Reconnaissance mission}
\label{ss:experiments:uav}
Firstly, the reconnaissance mission 
with an uninhabited aerial vehicle (UAV)
that was mentioned as a motivation in
\mbox{Section \ref{s:introduction}} is investigated.
The mission is illustrated in Fig.~\ref{fig:uav}.
\subsubsection{Control problem}
The dynamics of Dubins vehicle \cite{LaValle06}
are assumed for the UAV 
with additional disturbances, 
i.e. dynamics \eqref{e:system:cont} with 
$W = \intcc{-5,5} \times \intcc{-2,2} \times \intcc{-0.04,0.04}$ and 
$f \colon \mathbb{R}^3 \times U \to \R^3$ defined by 
$U = \intcc{20,50}\times \intcc{-0.5,0.5}$,
\begin{equation*}
f(x,u) = \big (u_1 \cos(x_3), u_1 \sin(x_3), u_2 \big ).
\end{equation*}
Thus, the planar position of the UAV is 
described by $(x_1,x_2)$ and 
$x_3$ is its heading.
The control inputs $u_1$ and $u_2$ are 
the velocity and the angular velocity, 
respectively.
By theory, a time-discrete version
of the continuous dynamics needs to be considered,
which is the transition system $S = (\mathbb{R}^3, U , F)$
defined as the sampled system associated with \eqref{e:system:cont}
and sampling period $\tau = 0.65$ \cite[Def.~VIII.1]{ReissigWeberRungger17}.

The TSP of the form \eqref{e:ocp} with targets 
$A_1,A_2,\ldots, A_{41}$ and depot $A_1$ is to be solved on $S$, where $A_1 := A_{\mathrm{rwy}}$,
\begin{align*}
A_{\mathrm{rwy}} &= 
\intcc{300,900} \times 
\intcc{80,160} \times 
\intcc{-10^\circ,10^\circ}, \quad \text{(``runway")}\\
A_2 &= \intcc{375,425} \times \intcc{775,825} \times \R, \quad \text{(``area of interest")} \\
A_3 &= \intcc{525,575} \times \intcc{775,825} \times \R. \quad \text{(``area of interest")}
\end{align*}
The other target sets are translations of $A_2$, 
which are positioned as depicted in Fig.~\ref{fig:uav}. 
The running cost of the mission compromises between minimum time and 
small absolute angular velocities. 
Moreover, it includes the requirement 
of avoiding obstacles (cf. Section \ref{ss:tsp:definition}).
Specifically,
\begin{equation*}
g(x,y,u) = \begin{cases}
\infty, & \text{if } x \in (\R^3 \setminus X_{\mathrm{mis}}) \cup A_{\mathrm{nofly}} \cup A_{\mathrm{hill}} \\
\tau + u_2^2, & \text{otherwise (} u_2 \text{ in radians)}
\end{cases}
\end{equation*}
defines $g$ in \eqref{e:ocp}, 
where $A_{\mathrm{hill}} \subseteq \R^3$ is 
a spatial obstacle set as indicated in Fig.~\ref{fig:uav}, 
$X_{\mathrm{mis}} = \intcc{0,2500} \times \intcc{0,2200} \times \R$ is the mission area and
$A_{\mathrm{nofly}} = \intcc{320,880} \times \intcc{100,140} \times \intcc{12^\circ, 348^\circ}$
forces a proper approach to the airfield.
\subsubsection{Heuristic solution}
To solve the defined TSP, 
the algorithm in Fig.~\ref{fig:hybridcontroller} 
is applied 
and the controller 
shown in Fig.~\ref{fig:hybridcontroller:formula} 
controls the UAV.
To apply the algorithm, 
a discrete abstraction $(X',U',F')$ for $S$ is computed. 
Computational details are given in Tab.~\ref{tab:uav}.

The cost for the closed-loop trajectory 
shown in Fig.~\ref{fig:uav}, 
which is subject to the disturbance $w := (-5,-2,0) \in W$, 
is $348.5$. 
Without the use of the controller improvement in 
lines \ref{alg:for:controller}--\ref{alg:endminusone}, 
which would reduce runtime by $53\%$, 
the cost would be $440.3$ or $26\%$ more. 
Fig.~\ref{fig:uav:detail} indicates
what goes wrong in this case, 
namely targets sets are reached without taking into account that 
another target set will follow. See \cite{WeberKnoll20}.\looseness=-1

The synthesized controller is by design 
robust against any disturbance vector in $W$. 
Fig.~\ref{fig:uav:detail} depicts
a closed-loop trajectory when the disturbance $-w$ is acting on the UAV.
\begin{table}
\centering
\begin{tabular}{|l|l|}
\hline
Quantity & Value (description) \\
\hline
\hline
$|X'|$ & $41.25 \cdot 10^6$ (grid parameter $(0.1,0.1,2\pi/75)$) \\
$|U'|$ & $49$ ($7 \cdot 7$ values of $U$) \\
\hline
Runtime lines \ref{alg:queue}-\ref{alg:while:end} & $69$ min. (using \cite{WeberKreuzerKnoll20}) \\
Runtime line \ref{alg:tsp} & $< 1$ sec. (using \texttt{LKH-2.0.9} \cite{Helsgaun00}) \\
Runtime lines \ref{alg:for:controller}-\ref{alg:endminusone} & $78$ min. (using \cite{WeberKreuzerKnoll20}) \\
Total runtime & $147$ min. \\
Total RAM usage & $6.7$ GB \\
\hline
\end{tabular}
\caption{\label{tab:uav}Computational details to Section \ref{ss:experiments:uav}. 
The implementation is written in C. 
Computations are executed in parallel with $48$ cores on x86-64 SuSE Linux (Intel Xeon E5-2697 v3, 2.6 GHz).}
\end{table}
\begin{figure}
\hfill~\includegraphics[scale=1]{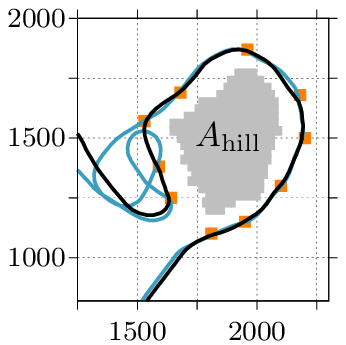}\hfill
\includegraphics[scale=1]{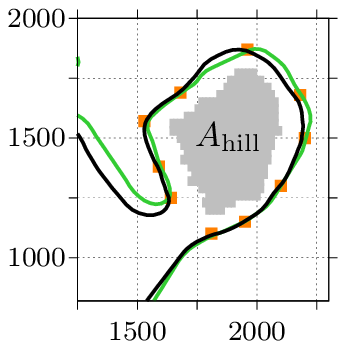}\hfill~
\caption{\label{fig:uav:detail}Detail to the reconnaissance mission in Section \ref{ss:experiments:uav}. The black-coloured trajectory is the same as in Fig.\ref{fig:uav}. The blue-coloured trajectory is obtained when line \ref{alg:controller} is replaced by \eqref{e:lineofalgorithm}. The green-coloured trajectory is subject to the opposite disturbance acting on the black-coloured trajectory.}
\vspace*{-.4cm}
\end{figure}
\subsection{Urban parcel delivery}
\label{ss:parceldelivery}
\begin{figure*}
\hfill{}
\centering
\begin{minipage}{0.6\textwidth}
\centering
\includegraphics[scale=1]{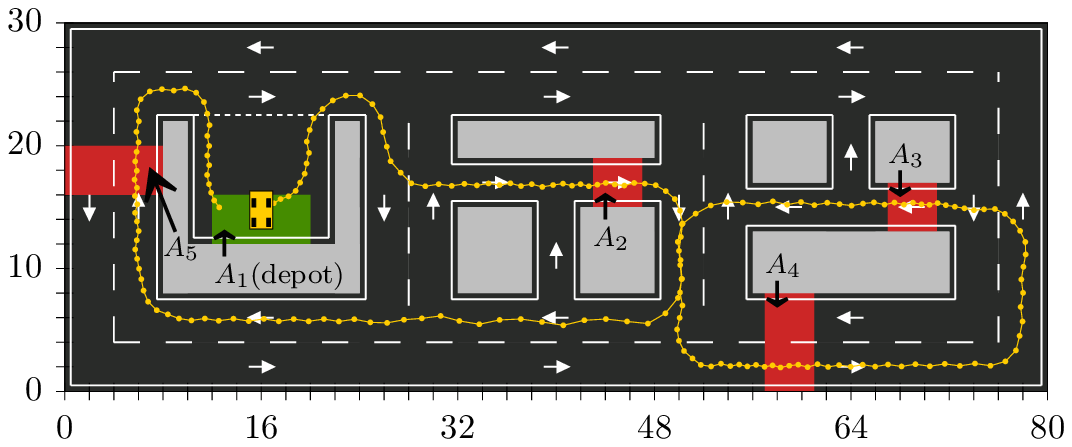}
\caption{\label{fig:vehicle}
Illustration of the closed-loop trajectory 
obtained through the proposed algorithm 
(Fig.~\ref{fig:hybridcontroller:algorithm}). 
The initial state $(16,14,\pi/2,5)$ of the trajectory is indicated by the car symbol.}
\end{minipage}
\hfill{}
\begin{minipage}{0.33\textwidth}
\centering
\captionsetup{type=table} %
\begin{tabular}{cr} 
  Tour & Total cost \\ \hline \hline
  $\mathbf{(1,2,4,3,5,1)}$ & $\mathbf{94.2}$ \\
  $(1,2,3,4,5,1)$ & $104.1$ \\
  $(1,5,2,4,3,1)$ & $106.3$ \\
  $(1,4,3,2,5,1)$ & $107.6$ \\
  $(1,5,4,3,2,1)$ & $109.1$ \\
  $(1,3,4,2,5,1)$ & $112.1$ \\
  $(1,2,5,4,3,1)$ & $114.8$ \\
  others & $> 114.8$ \\ \hline
    \phantom{1} & \phantom{1}
  \end{tabular}
\caption{\label{tab:vehicle:costs}Comparison of the total cost \eqref{e:costfunctional:definition} of 
the closed-loop trajectory initialized at 
$(16,14,\pi/2,5)$ for all possible tours. 
The tour in the first row corresponds 
to the trajectory shown in Fig.~\ref{fig:vehicle}.}
\end{minipage}
\hfill{}
\vspace{-\baselineskip}
\end{figure*}
The second scenario is a 
delivery task in an urban environment 
as depicted in Fig.~\ref{fig:vehicle}:
A delivery truck has to visit 
the four areas $A_2, \ldots, A_5$ coloured in red 
starting from and 
returning to the depot $A_1$ coloured in green. 
Details are given below. 
Parts of the scenario are taken from \cite[Sect.~IV.A]{WeberKnoll20}.

\subsubsection{Control problem}
The equations of motion of the truck 
are given by \eqref{e:system:cont} and
include four states, 
which are the planar position $(x_1,x_2)$, 
the orientation $x_3$ and the velocity $x_4$. 
The control inputs are the
acceleration $u_1$ and 
steering angle $u_2$ of the truck. 
The map $f$ and the set $W$ in \eqref{e:system:cont} 
are given by
\begin{equation}
\label{e:examples:vehicle:dynamics}
f\colon \R^4 \times U \to \R^4, 
f(x,u) = \begin{pmatrix}
x_4 \cdot \cos(\alpha + x_3) \cdot \beta \\
x_4 \cdot \sin(\alpha + x_3) \cdot \beta \\
x_4 \cdot \tan(u_2) \\
u_1
\end{pmatrix},
\end{equation}
where $U = \intcc{-6,4} \times \intcc{-0.5,0.5}$, $\alpha = \arctan(\tan(u_2)/2)$, 
$\beta =  \cos(\alpha)^{-1}$, $W=\{(0,0)\} \times \intcc{-0.01,0.01} \times \intcc{-0.1,0.1}$.
We consider the sampled system \eqref{e:system} with $X=\R^4$
associated with \eqref{e:system:cont} and sampling period $\tau = 0.1$.
The delivery task is defined as 
the TSP \eqref{e:ocp} with targets sets 
$A_1,\ldots,A_5$ and 
depot $A_1$, where
\begin{align*}
A_1 &= \intcc{12,20} \times \intcc{12,16} \times (\intcc{0,2\pi} \setminus I_{\mathrm{south}})\times \intcc{0,7}, \\ 
I_{\mathrm{south}} &= 3\pi/2 + \intoo{-3\pi/8,3\pi/8}, \\
A_2 &= \intcc{43,47} \times \intcc{15,19} \times \intcc{0,2\pi} \times \intcc{0,7}.
\end{align*}
Both $A_1$ and $A_2$ limit speed 
while $A_1$ additionally 
prohibits a truck orientation to the south.
The other targets sets are similar to $A_2$ and 
can be identified from Fig.~\ref{fig:vehicle}.

The running cost $g$ satisfies $g(x,y,u) = \infty$ in two cases: 
Firstly, if $x$ is in the obstacles set $(\R^4 \setminus \bar X) \cup O$, where 
$O$ is the union of the grey-coloured sets in Fig.~\ref{fig:vehicle} and 
\begin{equation*}
\bar X = \intcc{0,80} \times \intcc{0,30} \times \intcc{0,2\pi} \times \intcc{0,18}.
\end{equation*}
Secondly, if $x$ violates the common right-hand traffic rules: 
the traffic rules are not violated if, e.g., $x$ is in
\begin{align*}
\intcc{0,80} \times \intcc{26,30} \times I_{\mathrm{west}} \times \intcc{0,18}, I_{\mathrm{west}} = \pi + \intcc{-\tfrac{3\pi}{8},\tfrac{3\pi}{8}}
\end{align*}
which is the northernmost lane, or if $x$ is in
\begin{equation*}
\intcc{10,22} \times \intcc{12,30} \times \intcc{0,2\pi} \times \intcc{0,18},
\end{equation*}
which are states in proximity of the depot.
In the finite case, 
$g$ balances minimum time and proper driving style, i.e.
\begin{equation*}
g(x,y,u) = \tau + u_2^2 + \min_{m\in M}\| (y_1,y_2) - m \|_2.
\end{equation*}
Here, $M \subseteq \R^2$ describes the axes of the roadways, e.g. 
$\intcc{2,78} \times \{28\} \subseteq M$, and the traffic guidance into the depot by\looseness=-1
\begin{equation*}
\big ( \{12\} \times \intcc{14,28} \big ) \cup \big (\{20\} \times \intcc{14,28} \big ) \cup \big ( \intcc{12,20} \times \{14\}\big ) \subseteq M.
\end{equation*}
\subsubsection{Heuristic solution}
A discrete abstraction $(X',U',F')$ is computed, 
where $X'$ possesses the grid parameter
$(8/15,30/57,2\pi/62,9/25)$ and 
$U'$ consists of $8\cdot 10$ values of $U$. 
The total runtime to solve the problem 
with the algorithm in Fig.~\ref{fig:hybridcontroller:algorithm} 
is $3$ hours using $21$ GB RAM.
The total cost for the closed-loop trajectory shown in Fig.~\ref{fig:vehicle} is $94.2$. 
Tab.~\ref{tab:vehicle:costs} lists the costs for other possible tours and 
confirms that the heuristic we proposed returns the cheapest tour.
\section{Conclusions}
\label{s:conclusions}
We considered a generalization of the \TSP{}, where the salesman's path
evolves subject to continuous-state discrete-time dynamics 
with possible uncertainties.
By subdividing the problem into several special 
(quantitative) reach-avoid problems 
we succeeded in synthesizing controllers
steering the salesman heuristically optimal to its targets.
Formally, the obtained controllers are correct-by-design 
ensuring that the involved coverage specification is enforced, at least qualitatively, on the closed loop.

Finally, we would like to point out that our method can be easily 
extended to a similar generalization of the \begriff{Multiple \TSP{}} \cite{BellmoreHong74}. 
This requires only to slightly generalize line \ref{alg:tsp}
of the algorithm in Fig.~\ref{fig:hybridcontroller:algorithm}. 
Then, for example, also 
control policies for multi-UAV missions can be synthesized.
\section*{Acknowledgements}
The authors would like to thank Dominik K\"unzel for the discussion on existing literature on the TSP and the Leibniz Supercomputing Centre for providing the compute resources.

\bibliography{../../BibTex/mrabbrev,../../BibTex/articles,../../BibTex/books,../../BibTex/online}
\end{document}